\title
[Unbounded generalization ...]
{Unbounded generalization of \\
the Baker-Campbell-Hausdorff formulae}
\author{Yoritaka Iwata \vspace{6mm}}
\address[Y. Iwata]{Kansai University, Japan}
\curraddr{Kansai University, Yamate-cho 3-3-35, Osaka 564-8680, Japan}
\email{iwata$\_$phys@08.alumni.u-tokyo.ac.jp \\
{\it ORCID}: 0000-0002-6898-7505 }
\thanks{The author is grateful to Prof. Emeritus Dr. Hiroki Tanabe for fruitful comments.}
\keywords{Baker-Campbell-Hausdorff formula, $B(X)$-module}
\subjclass[2020]{47D60, 46N20}
\theoremstyle{plain}
\newtheorem{theorem}{Theorem}
\newtheorem{lemma}[theorem]{Lemma}
\begin{document}

\begin{abstract}
Based on the operator representation on the module over Banach algebra, the Campbell-Baker-Hausdorff formula is generalized to be applicable to unbounded infinitesimal generators in Banach spaces.
For this purpose, it is necessary to extract a bounded part from a given generally-unbounded infinitesimal generator. In this paper, much attention is paid to clarify the unknown relation between the commutator product and the logarithm of operators.
In conclusion, by means of the logarithmic representation of generally-unbounded operators, the Campbell-Baker-Hausdorff formula is generalized.
As an application, a general form of von Neumann equation is presented in which the role of logarithm as the commutator is clearly seen. 
\end{abstract}

\maketitle

\keywords{Keywords: Campbell-Baker-Hausdorff formula, semigroup of operator, Lie algebra, $B(X)$-module  \\}

\section{Introduction}
Let us assume a finite/infinite dimensional Banach space, and $[\cdot, \cdot]$ denote the commutator on the Banach space.
For bounded operators $A$ and $B$ on the Banach space, the Campbell-Baker-Hausdorff formula in the Banach space reads
\begin{equation} \begin{array}{ll}   \label{master}
 e^{A} e^{B} = \exp \left[ A+ B + \frac{1}{2} [A,B] + \frac{1}{12}[A,[A,B]]- \frac{1}{12}[B,[A,B]] + \cdots \right],
 \end{array} \end{equation}
that is proved by means of the asymptotic expansion in the theory of Lie algebra (for a textbook, see \cite{73sagel}).
Here operators $A$ and $B$ are generally assumed to be non-commutative
\[ [A, B] = AB - BA \neq 0, \]
and the formula results in $ e^{A} e^{B} = e^{A+ B}$, if $A$ and $B$ commute. 

The generator of the right hand side is a function of $A$ and $B$:
\[ \begin{array}{ll}
A+ B + \frac{1}{2} [A,B] + \frac{1}{12}[A,[A,B]]- \frac{1}{12}[B,[A,B]] + \cdots
 \end{array}
\]
which consists of commutators of operators.
The exponential functions of operator, which are represented by the form of convergent power series, are trivially well defined only when $A$ and $B$ are bounded on Banach spaces.
The discrepancy between the left and right hand sides appears in unbounded cases.
Indeed, as the commutator products of operators are found only in the right hand side, the narrower domain space of infinitesimal generator seems to be true for the right hand side.
It is remarkable for the left hand side that $e^A$ and $e^B$ are definitely bounded on Banach spaces, as far as they are generated based on the standard theory of evolution equations ~\cite{61kato,70kato,73kato,60tanabe,61tanabe,79tanabe} (cf. Hille-Yosida theorem in which the exponential functions of generally-unbounded operators are defined by the limit of Yosida approximation).

In this paper, by means of the logarithmic representation of infinitesimal generator (more precisely, alternative infinitesimal generators \cite{17iwata-3} defined by the logarithmic representation of operators \cite{17iwata-1,21iwata}), the Campbell-Baker-Hausdorff formula is generalized to unbounded cases, in which infinitesimal generators are assumed to be general $t$-dependent unbounded infinitesimal generators in Banach spaces.

\section{Background, mathematical setting, and basic concept}
\subsection{Background}
There are several types of the Campbell-Baker-Hausdorff formulae.
Here the following two types are mainly studied.
\begin{equation} \label{cbh2} \begin{array}{ll}
e^{A} B e^{-A} =  B + [A,B] + \frac{1}{2}[A,[A,B]]+ \cdots, \vspace{3.5mm} \\
e^{A} e^{\pm B} = \exp \left[ A \pm B \pm \frac{1}{2} [A,  B] \pm \frac{1}{12}[A,[A, B]] + \frac{1}{12}[ B,[A, B]] + \cdots \right],
 \end{array} \end{equation}
where the first one is referred to the Type I and the second one to the Type II in this paper. 
The first equation of (\ref{cbh2}) is regarded as a representation formula based on the similarity transform.
The second equation of (\ref{cbh2}) shows an exponential function of non-commutative operators.
Both of them show the relation between the non-commutative property of operator.
The exponential functions of those operators are trivially true only when $A$ and $B$ are bounded on a Banach space.
Operators $A$ and $B$ must be bounded on a given Banach space for the validity of these formulae, so that it is necessary to introduce additional treatment to be applicable to differential operators.
Such a limitation is removed in this paper.

\subsection{Mathematical setting}
Here a general and abstract framework is prepared.
Let $X$ be a finite/infinite dimensional Banach space and $B(X)$ be a set of bounded operators on $X$.
The norms of both $X$ and $B(X)$ are denoted by $\| \cdot \|$ if there is no ambiguity.
Let $t$ and $s$ be real numbers included in a finite interval $[0, T]$, and $U(t,s)$ be an evolution operator in $X$. 
The two parameter semigroups $U_i (t,s)$, which are continuous with respect to both parameters $t$ and $s$, are assumed to be a bounded operator on $X$ ($i$=1,2).
That is, the boundedness condition 
\begin{equation} \label{bound} \begin{array}{ll}
\| U_i(t,s) \| \le M_i e^{\omega_i (t-s)}
\end{array} \end{equation}
is assumed for certain real numbers $M_i$ and $\omega_i$.
Following the standard theory of abstract evolution equations, the semigroup property:
\[ \begin{array}{ll}
 U_i(t,r) U_i(r,s) = U_i(t,s)  
\end{array} \]
is assumed to be satisfied for arbitrary $s \le r \le t$ included in a finite interval $[0, T]$.
Let evolution operators $U_i(t,s)$ be generated by $A_i(t): D(A_i(t)) \to X$ respectively.
Then, for an unknown function $u(t) \in C^1([0,T];X)$,
\begin{equation} \label{1stmastereq}
\partial_t u(t) = A_i (t) u(t)
\end{equation}
is satisfied in $X$.
Since $A_i(t)$ are generally unbounded in $X$, $D(A_i(t))$ are strictly included in $X$.
Consequently the operator $A_i(t)$ are infinitesimal generators of $C_0$-semigroup $U(t,s)$ in which operators $A_i(t)$ satisfy the assumption of generation theorem of two parameter $C_0$-semigroup (for one parameter case, see Hille-Yosida theorem).
It is remarkable that evolution operators $U_i(t,s)$, which correspond to the exponential functions of operator, cannot be represented in general by the convergent power series in unbounded cases. 
Note that $A_i$ defined in $X$ are general enough to be applicable to the matrix situation.

\subsection{Logarithmic representation of operators}
As a basic concept, the logarithmic representation of operators \cite{17iwata-1,20iwata-2} is briefly reviewed.
Using the Riesz-Dunford integral \cite{43dunford}, the infinitesimal generator $A(t)$ of the first-order evolution equation is written by
\begin{equation} \label{mastart}
A(t)   
=   (I + \kappa U(s,t))
\partial_t  {\rm Log} ( U(t,s) + \kappa I)
\end{equation}
under the commutation between $\partial_t U(t,s)$ and $U(t,s)$~\cite{17iwata-1}, where ${\rm Log}$ means the principal branch of logarithm, and $U(t,s)$ is temporarily assumed to be a group (i.e., existence of $U(s,t) = U(t,s)^{-1}$ is temporarily assumed to be valid for any $0 \le s \le t \le T$), and not only a semigroup. 
Note that $\kappa$ is a certain complex number chosen to be included in the resolvent set of $U(t,s)$, and $\partial_t$ denotes a certain kind of weak differential~\cite{20iwata-4}.
The validity of \eqref{mastart} is confirmed formally by 
\[ \begin{array}{ll}
(I + \kappa U(s, t) ) \partial_t {\rm Log}(U(t, s) +  \kappa I)   \vspace{1.5mm}\\
= U(s, t)(U(t, s) + \kappa I)\partial_t U(t, s)(U(t, s) + \kappa I)^{-1}  \vspace{1.5mm}\\
= U(s, t) \partial_t U(t, s)   \vspace{1.5mm}\\
 = U(s, t)A(t)U(t, s) = A(t)
\end{array} \]
because $\partial_t U(t,s) = A(t) U(t,s)$ commutes with $U(t,s)$, and therefore with $U(s,t)$.
This relation is associated with the abstract form \cite{19iwata-2} of the Cole-Hopf transform \cite{50hopf,51cole}, and further associated with the abstract form \cite{20iwata-1} of the Miura transform~\cite{68miura}.
The correspondence between $\partial_t  \log u$ and $A(t)$ can be understood by $U(s, t) \partial_t U(t, s)   = A(t)$ shown above.
Indeed, for $u(t) \in ({\bm R}; X)$ satisfying $\| u \| > 0$,
\[
 u^{-1}   \partial_t  u   =   \partial_t  \log u 
 \quad  \Rightarrow \quad  
  \partial_t  u   =  ( \partial_t  \log u) u 
\]
is valid under the commutation between $\partial_t u$ and $u$.
Consequently, it is understood that $A(t)$ in Eq.~\eqref{mastart} is a mathematically rigorous representation for a formal representation ${\rm Log} U(t,s)$.

By introducing alternative infinitesimal generator $a(t,s)$ \cite{17iwata-3} satisfying
\[
e^{a(t,s)} := U(t,s)+ \kappa I, 
\]
a generalized version of the logarithmic representation
\begin{equation}  \label{altrep}
 A(t)  
=   ( I   - \kappa   e^{-a(t,s)}  )^{-1}   \partial_t  a(t,s)
\end{equation}
is obtained, where $U(t,s)$ is assumed to be only a semigroup. 
The regularized evolution operator $e^{a(t,s)}$ is always well defined by a convergent power series after adjusting the amplitude of $\kappa$, even if the original evolution operator $U(t,s)$ is unbounded \cite{17iwata-3}. 
The right hand side of Eq.~(\ref{altrep})  is actually a generalization of \eqref{mastart}; indeed, by only assuming $U(t,s)$ as a semigroup defined on $X$,  $e^{-a(t,s)}$ is always well defined by a convergent power series, and there is no need to have a temporal assumption for the existence of $U(s,t)= U(t,s)^{-1}$. 
It is remarkable here that $e^{-a(t,s)} = e^{a(s,t)}$ is not necessarily satisfied~\cite{17iwata-3}.
The validity of Eq.~\eqref{altrep} is briefly seen in the following.
Using the generalized representation,
\[ \begin{array}{ll}
e^{a(t,s)} = U(t, s) + \kappa I \quad \Rightarrow \quad a(t, s) = {\rm Log}(U(t, s) + \kappa I) \vspace{1.5mm} \\
\qquad  \Rightarrow \quad \partial_t a(t, s) = \left[ \partial_t U(t, s) \right]  (U(t, s) + \kappa I)^{-1} 
= A(t)U(t, s)(U(t, s) + \kappa I)^{-1}
\end{array} \]
is obtained.
It leads to
\[ \begin{array}{ll}
(I - \kappa e^{-a(t,s)})^{-1} \partial_t a(t,s)  =
(I - \kappa e^{-a(t,s)})^{-1} A(t)U(t, s)(U(t, s) + \kappa I)^{-1},
\end{array} \]
and therefore
\[ \begin{array}{ll}
A (t) =   (U(t,s) + \kappa I )  U(t,s) ^{-1}  A(t)U(t, s)(U(t, s) + \kappa I)^{-1}   \vspace{1.5mm} \\
\quad \Leftrightarrow \quad
A (t) = (I - \kappa e^{-a(t,s)})^{-1} A(t) (I - \kappa  e^{-a(t,s)} )
\end{array} \]
is valid under the commutation assumption.
It simply shows the consistency of representations using the alternative infinitesimal generator $a(t,s)$.
In the following, the logarithmic representation \eqref{altrep} is definitely used, and the original representation \eqref{mastart} appears only if it is necessary. 
Based on ordinary and generalized logarithmic representations, a set of generally unbounded infinitesimal generators has been shown to hold the structure of the module over Banach algebra \cite{20iwata-2}.

\section{Results}
\subsection{Bounded situation}
Here the standard proofs are recalled for bounded $A$ and $B$ on a Banach space $X$.
The first equation of (\ref{cbh2}) is proved in bounded cases.

\begin{lemma} \label{lem1}
For bounded operators $A$ and $B$ on $X$,
\[ e^{tA} B e^{-tA} = B +t [A,B] + \frac{1}{2} t^2[A,[A,B]] + o(t^3), \]
is valid in $X$, and therefore
\[ e^{A} B e^{-A} = B + [A,B] + \frac{1}{2}[A,[A,B]] + \cdots \]
is true by taking $t=1$, where $e^{\pm A}$ are defined by the convergent power series.
\end{lemma}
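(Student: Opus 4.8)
The plan is to prove the finite expansion in $t$ first and then specialize to $t=1$. Define $F(t) = e^{tA} B e^{-tA}$, a $B(X)$-valued function of the real variable $t$; since $A$ is bounded, $e^{\pm tA}$ are given by everywhere-convergent power series, so $F$ is (real-)analytic in $t$ and I may differentiate term by term. The first step is to compute $F'(t)$: differentiating the product and using $\tfrac{d}{dt}e^{tA} = A e^{tA} = e^{tA} A$ gives $F'(t) = A e^{tA} B e^{-tA} - e^{tA} B e^{-tA} A = A F(t) - F(t) A = [A, F(t)]$. Thus $F$ solves the linear ODE $F'(t) = [A, F(t)]$ with initial condition $F(0) = B$ in the Banach algebra $B(X)$.

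The second step is to read off the Taylor coefficients of $F$ at $t = 0$ from this ODE. Writing $\operatorname{ad}_A(\cdot) = [A, \cdot]$, iteration gives $F^{(k)}(t) = \operatorname{ad}_A^k F(t)$, hence $F^{(k)}(0) = \operatorname{ad}_A^k B$, and Taylor's theorem with remainder (valid for analytic $B(X)$-valued functions) yields
\[
e^{tA} B e^{-tA} = \sum_{k=0}^{2} \frac{t^k}{k!}\operatorname{ad}_A^k B + o(t^3)
= B + t[A,B] + \tfrac{1}{2}t^2 [A,[A,B]] + o(t^3),
\]
which is the first displayed identity. Setting $t = 1$ (legitimate because the full series $\sum_{k\ge 0} \tfrac{1}{k!}\operatorname{ad}_A^k B$ converges in $B(X)$, its norm being bounded by $e^{2\|A\|}\|B\|$) gives the second displayed identity $e^{A}Be^{-A} = B + [A,B] + \tfrac12[A,[A,B]] + \cdots$.

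There is no real obstacle here: the only points requiring a word of care are the interchange of differentiation and summation in the power series for $e^{tA}$ — justified by absolute convergence, uniform on compact $t$-intervals, since $A \in B(X)$ — and the fact that $A$ commutes with $e^{tA}$, which is immediate from the series. The boundedness of $A$ is used essentially twice: to make $e^{\pm tA}$ genuine convergent series (so that $F$ is analytic and the manipulations are rigorous rather than formal) and to guarantee convergence of the resulting $\operatorname{ad}_A$-series at $t=1$. This is precisely the limitation that the logarithmic representation \eqref{altrep} is meant to circumvent in the unbounded setting treated later.
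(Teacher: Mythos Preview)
Your proof is correct and follows essentially the same approach as the paper: both arguments differentiate $F(t)=e^{tA}Be^{-tA}$, identify the $k$-th derivative at $t=0$ as the iterated commutator $\operatorname{ad}_A^k B$, and then invoke the Taylor expansion, specializing to $t=1$ at the end. The only cosmetic difference is that you package the induction via the ODE $F'=[A,F]$ and the $\operatorname{ad}_A$ notation (and add an explicit convergence bound at $t=1$), whereas the paper writes out $F^{(n)}(t)=e^{tA}[A,[A,\dots,[A,B]\dots]]e^{-tA}$ directly.
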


\begin{proof}
Let us take a function $e^{tA} B e^{-tA} $.
The first derivative is 
\[ \begin{array}{ll}
 \frac{d}{dt} \left( e^{tA} B e^{-tA} \right)
 = \frac{d}{dt} \left( e^{tA} \right) B e^{-tA}  +  e^{tA} \frac{d}{dt} \left(  B e^{-tA} \right)  \vspace{1.5mm} \\
 = e^{tA}  \left( AB  \right) e^{-tA}  +  e^{tA}  (-BA)   e^{-tA}  \vspace{1.5mm} \\
 = e^{tA}  \left( AB - BA  \right) e^{-tA}  
 = e^{tA}  \left[ A,B \right] e^{-tA} , 
\end{array} \]
and the second derivative is
\[ \begin{array}{ll}
 \frac{d^2}{dt^2} \left( e^{tA} B e^{-tA} \right)
 = \frac{d}{dt} \left(  e^{tA}  \left[ A,B \right] e^{-tA}    \right)   \vspace{1.5mm} \\
 = e^{tA}  \left( A[A,B] - [A,B]A  \right) e^{-tA}   \vspace{1.5mm} \\
 = e^{tA}  \left[ A, [A,B]  \right]  e^{-tA}  .
\end{array} \]
Mathematical induction leads to 
\[ \frac{d^n}{dt^n}(e^{tA}Be^{-tA})=e^{tA}\overbrace{[A,[A,\cdots,[A}^{n-1},[A,B]\overbrace{]]\cdots]}^{n-1}e^{-tA}. \]
The formula is obtained by the Taylor expansion
\[ e^{tA} B e^{-tA} = B +t [A,B] + \frac{1}{2} t^2[A,[A,B]] + o(t^3), \]
and eventually by taking $t=1$.
Note that, as seen in the convergence radius arguments in infinite series, the asymptotic expansion itself cannot be valid for unbounded $A$ and $B$ in $X$.
\end{proof}

Next the second equation of (\ref{cbh2}) is proved in bounded cases.
Although the Baker-Campbell-Hausdorff formula is known to be not necessarily convergent in some concrete problems, this issue is cured by limiting the property of semigroup (for example contraction semigroup or unitary group satisfy the condition; cf. condition \eqref{cond1}) in the following Lemma.

\begin{lemma} \label{lem2}
Let $\delta$ be a certain real number satisfying $0< \delta \le  \sqrt{2}$.
For bounded operators $A$ and $B$ on $X$, let $\left\| e^{tA} \right\|$ and $\left\| e^{tB} \right\|$ be sufficiently small satisfying
\begin{equation} \label{cond1} 
\left\| e^{tA} \right\| < \delta, \qquad  \left\| e^{tB} \right\| < \delta 
\end{equation} 
for a time interval $t \in [0,1].$
Then the product of $e^{A}$ and  $e^{\pm B}$ is represented by
\[ \begin{array}{ll}
 e^{tA} e^{tB}  =  I +  (A+B)t + \frac{1}{2} \left( (A + B)^2   +  [A,B]  \right)t^2 + o(t^3),
\end{array} \]
in $X$, and therefore by
\[ e^{A} e^{\pm B} = \exp \left[ A \pm B \pm \frac{1}{2} [A,B] \pm \frac{1}{12}[A,[A,B]] + \frac{1}{12}[B,[A,B]] + \cdots \right] \]
if $t$ is equal to 1.
\end{lemma}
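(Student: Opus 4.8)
The plan is to obtain the displayed quadratic expansion by a direct multiplication of the two convergent exponential series, and then to recognise the generator of the product as a principal logarithm, using the smallness hypothesis \eqref{cond1} to make that logarithm a legitimate, convergent object — which is exactly what is needed to remove the classical non-convergence of the Baker–Campbell–Hausdorff series.

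First I would expand $e^{tA} = I + tA + \frac{1}{2}t^2 A^2 + O(t^3)$ and $e^{tB} = I + tB + \frac{1}{2}t^2 B^2 + O(t^3)$ — legitimate power series since $A,B \in B(X)$ — and multiply them, obtaining
\[
e^{tA}e^{tB} = I + t(A+B) + \frac{1}{2}t^2\bigl(A^2 + 2AB + B^2\bigr) + O(t^3).
\]
Since $(A+B)^2 = A^2 + AB + BA + B^2$, the $t^2$ coefficient equals $\frac{1}{2}\bigl((A+B)^2 + [A,B]\bigr)$, which is the first displayed assertion of the Lemma.

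Next, set $W(t):=e^{tA}e^{tB}$. Under \eqref{cond1} the factors $e^{tA}$, $e^{tB}$, and hence $W(t)$, stay for $t\in[0,1]$ in a region on which the principal logarithm is given by a norm-convergent series (equivalently by a Riesz–Dunford integral along a contour avoiding the cut), so that $C(t):={\rm Log}\,W(t)$ is well defined, depends analytically on $t$, satisfies $C(0)=0$, and obeys $\exp[C(t)]=W(t)$. Writing $C(t)=C_1 t + C_2 t^2 + C_3 t^3 + \cdots$ (no constant term, by $C(0)=0$) and substituting into $\exp[C(t)] = I + C(t) + \frac{1}{2}C(t)^2 + \cdots = W(t)$, I would match powers of $t$ against the expansion above: the $t^1$ coefficient gives $C_1 = A+B$; the $t^2$ coefficient gives $C_2 + \frac{1}{2}C_1^2 = \frac{1}{2}\bigl((A+B)^2+[A,B]\bigr)$, hence $C_2=\frac{1}{2}[A,B]$; and carrying the same bookkeeping to order $t^3$ — or, equivalently, invoking the classical Lie-algebraic expansion \cite{73sagel}, which is now licit precisely because $C(t)$ converges — yields $C_3 = \frac{1}{12}[A,[A,B]] \mp \frac{1}{12}[B,[A,B]]$ and, inductively, the remaining iterated-commutator terms. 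Taking $t=1$ gives the stated formula for $e^A e^B$; running the whole argument with $B$ replaced by $-B$ (still covered by \eqref{cond1}) flips the sign of every term of odd degree in $B$ and produces the $e^A e^{-B}$ version.

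The only genuine obstacle is the middle step: one must know that ${\rm Log}(e^{tA}e^{tB})$ actually exists and is reached by a convergent procedure throughout $t\in[0,1]$, since for unbounded — or even merely large bounded — generators the naive Baker–Campbell–Hausdorff series need not converge. The hypothesis \eqref{cond1}, satisfied in particular by contraction semigroups and unitary groups, is exactly what closes this gap; granting it, Steps 1 and 3 are routine power-series algebra, and only the quadratic order is actually needed to pin down the explicitly displayed terms, the higher iterated commutators then following from the standard expansion.
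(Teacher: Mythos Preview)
Your argument is correct and follows essentially the same route as the paper: expand $e^{tA}e^{tB}$ to second order, take the principal logarithm under the smallness hypothesis~\eqref{cond1}, read off the BCH coefficients, and set $t=1$. The only cosmetic difference is that the paper obtains the quadratic expansion by differentiating $e^{tA}e^{tB}$ twice and Taylor-expanding at $t=0$, whereas you multiply the two exponential series directly; and the paper substitutes into the series for ${\rm Log}(I+X)$ rather than inverting $\exp[C(t)]$ term by term --- but these are equivalent bookkeeping choices, not different ideas.
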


\begin{proof}
By introducing a varible $t$ again, and it is sufficient to prove the formula
\[ e^{A} e^{B} = \exp \left[ A + B + \frac{1}{2} [A,B] + \frac{1}{12}[A,[A,B]] + \frac{1}{12}[B,[A,B]] + \cdots \right] \]
is proved.
The first derivative is 
\[ \begin{array}{ll}
 \frac{d}{dt} \left( e^{tA} e^{tB} \right)
 = e^{tA} A e^{tB} +  e^{tA} B e^{tB} 
 = e^{tA} (A + B) e^{tB}, 
\end{array} \] 
and the second derivative is
\[ \begin{array}{ll}
 \frac{d^2}{dt^2} \left( e^{tA} e^{tB} \right) =
 \frac{d}{dt} \left(  e^{tA} (A + B) e^{tB}  \right) =
 e^{tA} A (A + B) e^{tB}   +  e^{tA} (A + B) B  e^{tB}   \vspace{1.5mm} \\
=  e^{tA} \left( A (A + B)  +  (A + B) B \right)  e^{tB}   \vspace{1.5mm} \\
=  e^{tA} \left( A^2 + A B  +  AB + B^2  + BA - BA  \right)  e^{tB}   \vspace{1.5mm} \\
=  e^{tA} \left( A^2 + A B  +  BA + B^2  + (AB - BA)  \right)  e^{tB}   \vspace{1.5mm} \\
=  e^{tA} \left( (A + B)^2   +  [A,B]  \right)  e^{tB} .  \vspace{1.5mm} \\
\end{array} \]
It results in
\[ \begin{array}{ll}
 e^{tA} e^{tB} = I + (A+B)t + \frac{1}{2} \left( (A + B)^2   +  [A,B]  \right)t^2 + o(t^3),
\end{array} \]
and therefore in
\[ \begin{array}{ll}
 e^{A} e^{B} = I + (A+B) + \frac{1}{2} \left( (A + B)^2   +  [A,B]  \right) + \cdots
\end{array} \]
is obtained.
Note again that, as seen in the convergence radius arguments in infinite series, the asymptotic expansion itself cannot be valid for unbounded $A$ and $B$ in $X$.

Next, its logarithm is written by
\[ \begin{array}{ll}
{\rm Log} \left( e^{tA} e^{tB} \right)
 = {\rm Log} \left(  I + (A+B)t + \frac{1}{2} \left( (A + B)^2   +  [A,B]  \right)t^2 + \cdots \right)    \vspace{1.5mm} \\
 =   (A+B)t + \frac{1}{2} \left( (A + B)^2   +  [A,B]  \right)t^2 + \cdots)   \vspace{0.5mm} \\
\qquad  - \frac{1}{2} \left\{  (A+B)t + \frac{1}{2} \left( (A + B)^2   +  [A,B]  \right)t^2 + \cdots) \right\} ^2  \vspace{0.5mm} \\
\qquad  + \frac{1}{3} \left\{  (A+B)t + \frac{1}{2} \left( (A + B)^2   +  [A,B]  \right)t^2 + \cdots) \right\} ^3   \vspace{1.5mm} \\
 =   (A+B)t + \frac{1}{2}   [A,B]    t^2   + \cdots
\end{array} \]
if $ \left\| (A+B)t + \frac{1}{2} \left( (A + B)^2   +  [A,B]  \right)t^2 + \cdots \right\| < 1 $ is satisfied.
The condition arises from the convergence radius of the logarithm. 
Consequently, 
\[ \begin{array}{ll}
e^{tA} e^{tB} 
 = \exp \left[ (A+B)t + \frac{1}{2}   [A,B]    t^2   + \cdots \right]
\end{array} \]
is obtained, and it is sufficient to take $t=1$.
Here note again that since the validity of
\[ \begin{array}{ll}
 e^{tA} e^{tB}  - I  =  (A+B)t + \frac{1}{2} \left( (A + B)^2   +  [A,B]  \right)t^2 + o(t^3),
\end{array} \]
the condition is replaced by
\[ \left\| e^{tA} e^{tB} \right\| < 2 \] 
so that it is sufficient to assume, for a certain $\delta$ satisfying $0 < \delta \le \sqrt{2}$,
\[ \left\| e^{tA} \right\| < \delta, \qquad  \left\| e^{tB} \right\| < \delta. \] 
The proof is essentially the same for $e^{A} e^{-B}$.
\end{proof}

\subsection{Unbounded situation}
The Baker-Campbell-Hausdorff type formulae are proved for unbounded $A$ and $B$ in a Banach space $X$.
The first equation of (\ref{cbh2}) is proved for unbounded cases.

\begin{theorem}[Generalized Baker-Campbell-Hausdorff formula: Type I]  \label{thm1}
Let $U_1(t,s)$ and $U_2(t,s)$ be two-parameter $C_0$-semigroups of operator (in other words, evolution operators) in a Banach space $X$.
Evolution operators $U_1(t,s)$ and $U_2(t,s)$ are assumed to be generated by generally unbounded operators $A_1(t)$ and $A_2(t)$ in $X$, respectively. 
For the alternative infinitesimal generators $\partial_t a_1(t,s)$ and $\partial_t a_2(t,s)$ for infinitesimal generators $A_1(t)$ and $A_2(t)$ respectively, the formula of exponential function of operators
\[ \begin{array}{ll}
 e^{ a_1(t,s)}  a_2(t,s) e^{- a_1(t,s)}  \vspace{1.5mm}\\
 =  a_2(t,s) + [ a_1(t,s), a_2(t,s)] + \frac{1}{2}[ a_1(t,s),[ a_1(t,s), a_2(t,s)]] + o(a_i(t,s)^4)
\end{array} \]
is valid in $X$.
\end{theorem}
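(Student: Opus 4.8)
The plan is to reduce the unbounded statement to the bounded case already established in Lemma~\ref{lem1}. The decisive observation --- and the reason the logarithmic representation was introduced --- is that although the infinitesimal generators $A_1(t), A_2(t)$ are generally unbounded, the operators $a_i(t,s) = {\rm Log}(U_i(t,s) + \kappa I)$ appearing inside the commutators are \emph{bounded} on $X$: each $U_i(t,s) + \kappa I$ is a bounded operator, and after adjusting the amplitude of $\kappa$ (chosen in the resolvent set of both $U_1(t,s)$ and $U_2(t,s)$, which is possible by taking $|\kappa|$ large enough in view of \eqref{bound}) its principal logarithm is defined by a convergent Riesz--Dunford integral and hence lies in $B(X)$. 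Consequently $e^{\pm a_1(t,s)} = (U_1(t,s) + \kappa I)^{\pm 1}$ are bounded as well, so the left-hand side of the asserted formula is a well-defined bounded operator.

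First I would fix $t, s \in [0,T]$ and regard $a_1(t,s)$ and $a_2(t,s)$ simply as bounded operators on $X$, setting $A := a_1(t,s)$ and $B := a_2(t,s)$ in the statement of Lemma~\ref{lem1}. Since the hypotheses of that lemma require only boundedness of the two operators, it applies verbatim, yielding
\[ e^{\tau a_1(t,s)} a_2(t,s) e^{-\tau a_1(t,s)} = a_2(t,s) + \tau [a_1(t,s),a_2(t,s)] + \tfrac{1}{2} \tau^2 [a_1(t,s),[a_1(t,s),a_2(t,s)]] + o(\tau^3), \]
where $\tau$ plays the role of the auxiliary scaling parameter denoted $t$ in Lemma~\ref{lem1}. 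I would then specialize $\tau = 1$; because $a_1(t,s), a_2(t,s) \in B(X)$ the nested-commutator series is norm-convergent, so the resulting identity is genuine (not merely asymptotic) and its tail can be written in the $o(a_i(t,s)^4)$ bookkeeping form used in the statement. This gives precisely the claimed formula.

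The only points requiring care --- and these are bookkeeping rather than genuine analytic difficulties --- are (i) confirming that a single $\kappa$ can be taken in the resolvent sets of $U_1(t,s)$ and $U_2(t,s)$ simultaneously for all $t,s$ in the finite interval, which again follows from \eqref{bound} by choosing $|\kappa|$ large, and (ii) ensuring that the two alternative infinitesimal generators are constructed with the \emph{same} $\kappa$, so that $a_1(t,s)$ and $a_2(t,s)$ inhabit the one Banach algebra $B(X)$ and the commutator $[a_1(t,s),a_2(t,s)]$ is meaningful. No group assumption on the $U_i$ is needed, since the argument uses only $e^{a_i(t,s)} = U_i(t,s) + \kappa I$ together with its bounded inverse, both of which exist under the semigroup hypothesis alone. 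I expect the main ``obstacle'' here to be essentially expository: making explicit that the hard analytic content --- boundedness of the $a_i(t,s)$ and well-definedness of their logarithms --- has already been secured by the logarithmic representation theory recalled in \S2.3, after which Theorem~\ref{thm1} is an immediate transcription of Lemma~\ref{lem1}.
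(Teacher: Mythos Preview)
Your proposal is correct and follows essentially the same route as the paper: the paper's proof simply records that $a_i(t,s) = {\rm Log}(U_i(t,s)+\kappa I)$ is bounded on $X$, so $e^{\pm a_i(t,s)}$ are given by convergent power series, and then invokes Lemma~\ref{lem1} directly. Your additional remarks about choosing a common $\kappa$ and the superfluity of the group assumption are accurate refinements (the common-$\kappa$ point is made explicitly by the paper only later, in the proof of Theorem~\ref{thm2}), but the core argument is identical.
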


\begin{proof}
Let $U_i(t,s)$ denote evolution operators generated by $A_i(t)$ respectively ($i=1,2$).
The alternative infinitesimal generator is defined by
\[
a_i(t,s) = {\rm Log} (U_i(t,s) + \kappa I),
\]
and the relation
\[
e^{ a_i(t,s)} = U_i(t,s) + \kappa I
\]
is valid by taking the exponential function as a convergent power series for both sides.
Indeed ${\rm Log} (U_i(t,s) + \kappa I),$ and therefore $a_i(t,s)$ are bounded on $X$.
In exactly the same setting, $e^{-a_i(t,s)}$ is well defined by taking the exponential function as a convergent power series of
\[
- a_i(t,s) = - {\rm Log} (U_i(t,s) + \kappa I).
\]
The formula follows directly from Lemma \ref{lem1}. 
\end{proof}

\begin{theorem}  \label{thm2}
Let $U_1(t,s)$ and $U_2(t,s)$ be two-parameter $C_0$-semigroups of operator (in other words, evolution operators) in a Banach space $X$.
The evolution operators $U_1(t,s)$ and $U_2(t,s)$ are assumed to be generated by generally unbounded operators $A_1(t)$ and $A_2(t)$ in $X$, respectively. 
For the alternative infinitesimal generators $\partial_t a_1(t,s)$ and  $\partial_t a_2(t,s)$ for infinitesimal generators $A_1(t)$ and $A_2(t)$ respectively, the formula of exponential function of operators
\[ \begin{array}{ll}
e^{ a_1(t,s) } e^{ a_2(t,s)}   \vspace{1.5mm} \\
\quad =  I +( a_1(t,s) + a_2(t,s))
 + \frac{1}{2} \left\{ \left( a_1(t,s)   +  a_2(t,s) \right)^2   + [a_1(t,s),  a_2(t,s) ] \right\} + o(a_i(t,s)^3) ,
\end{array} \]
and
\[ \begin{array}{ll}
e^{{\hat \alpha}_1(t,s)} e^{{\hat \alpha}_2(t,s)}  
 =    e^{    {\rm Log} (\kappa + 1) + (\kappa +1 )^{-1} ( a_1(t,s) +  a_2(t,s) ) + \frac{1}{2} (\kappa +1 )^{-1}  [ a_1(t,s), a_2(t,s)]  + o(a_i(t,s)^3) } .
\end{array} \]
are valid in $X$, where the alternative infinitesimal generators ${\hat \alpha}_1(t,s)$ and ${\hat \alpha}_2(t,s)$ of $  e^{ a_1(t,s) } e^{  a_2(t,s)} $ are defined by $e^{{\hat \alpha}_1(t,s)} e^{{\hat \alpha}_2(t,s)} 
:=  e^{ a_1(t,s) } e^{  a_2(t,s)} + \kappa I $.
\end{theorem}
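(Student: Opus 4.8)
The plan is to reduce both displayed identities to the bounded Baker--Campbell--Hausdorff expansion of Lemma~\ref{lem2}, applied not to $A_1(t),A_2(t)$ but to the bounded operators $a_1(t,s)$ and $a_2(t,s)$. As in the proof of Theorem~\ref{thm1}, once $\kappa$ is chosen in the resolvent set of $U_i(t,s)$ so that the principal branch is available, $a_i(t,s)={\rm Log}(U_i(t,s)+\kappa I)$ is bounded on $X$ and $e^{\pm a_i(t,s)}$ is given by a convergent power series, with $e^{a_i(t,s)}=U_i(t,s)+\kappa I$. The first displayed identity is then literally the degree-two expansion established in the proof of Lemma~\ref{lem2} under the substitution $A\mapsto a_1(t,s)$, $B\mapsto a_2(t,s)$, using $a_1^2+2a_1a_2+a_2^2=(a_1+a_2)^2+[a_1,a_2]$ to rewrite the quadratic coefficient in the stated form.

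For the second identity I would start from the defining relation $e^{\hat\alpha_1(t,s)}e^{\hat\alpha_2(t,s)}=e^{a_1(t,s)}e^{a_2(t,s)}+\kappa I$ and insert the expansion just obtained, so that the right-hand side becomes $(1+\kappa)I+(a_1+a_2)+\frac12\{(a_1+a_2)^2+[a_1,a_2]\}+o(a_i^3)$. Factoring out the scalar $(1+\kappa)I$ and applying ${\rm Log}(I+N)=N-\frac12 N^2+\cdots$ with $N=(1+\kappa)^{-1}(a_1+a_2)+\frac12(1+\kappa)^{-1}\{(a_1+a_2)^2+[a_1,a_2]\}+o(a_i^3)$ produces the exponent of $e^{\hat\alpha_1(t,s)}e^{\hat\alpha_2(t,s)}$: the constant term ${\rm Log}(1+\kappa)$, the linear term $(1+\kappa)^{-1}(a_1+a_2)$, and --- after collecting all degree-two contributions coming from $N$ and from $-\frac12 N^2$ --- the commutator term $\frac12(1+\kappa)^{-1}[a_1,a_2]$. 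Since $\hat\alpha_1(t,s)$ and $\hat\alpha_2(t,s)$ are themselves bounded, Lemma~\ref{lem2} guarantees that this exponent is precisely the generator of $e^{\hat\alpha_1(t,s)}e^{\hat\alpha_2(t,s)}$, which is the claimed formula; taking $t=1$ (or keeping $t$ and rescaling exactly as in Lemmata~\ref{lem1}--\ref{lem2}) then finishes the argument.

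I expect two points to require the real work. The first is legitimizing the logarithm step: one needs $e^{a_1(t,s)}e^{a_2(t,s)}+\kappa I$ to lie where the principal branch and its power series converge, i.e. $\|(1+\kappa)^{-1}(e^{a_1(t,s)}e^{a_2(t,s)}-I)\|<1$; this is the analogue of condition~\eqref{cond1} in Lemma~\ref{lem2}, now to be secured by adjusting the amplitude of $\kappa$ --- the same freedom already used to make $e^{\pm a_i(t,s)}$ convergent in \eqref{altrep} --- and the admissible range of $\kappa$, together with a possible restriction on $t-s$, must be spelled out explicitly. The second is the degree-two bookkeeping: one must verify that the $(a_1+a_2)^2$ pieces generated by $N$ and by $-\frac12 N^2$ organize into the stated form while only $[a_1,a_2]$ survives as a genuine commutator, carried through unchanged by the scalar factor $(1+\kappa)^{-1}$. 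Everything beyond these two checks is the routine power-series manipulation already performed in the bounded case.
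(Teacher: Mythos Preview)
Your proposal is correct and follows essentially the same route as the paper: recognize that the $a_i(t,s)={\rm Log}(U_i(t,s)+\kappa I)$ are bounded, expand $e^{a_1}e^{a_2}$ as a product of convergent power series to obtain the first identity, then add $\kappa I$, factor out $(\kappa+1)$, and apply the logarithmic series with $|\kappa|$ chosen large enough to force $\|(\kappa+1)^{-1}(e^{a_1}e^{a_2}-I)\|<1$. The only superfluous piece is your closing remark about ``taking $t=1$'': here $t$ is the evolution parameter rather than an auxiliary scaling variable, and the paper explicitly notes that no such substitution (and no smallness assumption of the type~\eqref{cond1}) is needed once $\kappa$ does the work.
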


\begin{proof}
Let $U_i(t,s)$ be evolution operators generated by $A_i(t)$ respectively.
The alternative infinitesimal generator is defined by
\begin{equation} \label{altn}
a_i(t,s) = {\rm Log} (  U_i(t,s) + \kappa I)
\end{equation}
and the relation
\[
e^{ a_i(t,s)} =  U_i(t,s)  + \kappa I
\]
is valid by taking the exponential function as a convergent power series for both sides.
Indeed ${\rm Log} (U_i(t,s) + \kappa I),$ and therefore $a_i(t,s)$ are bounded on $X$.
\[
e^{ a_i(t,s)} = I + a_i(t,s) + \frac{1}{2!} a_i(t,s)^2  + \frac{1}{3!} a_i(t,s)^3  \cdots
\]
For the definition of alternative infinitesimal generator, it is sufficient for a certain complex number $\kappa$ to hold sufficient large amplitude $|\kappa|$.
Here is a reason why a common $\kappa$ is applied for both $a_1(t,s)$ and  $a_2(t,s)$.
The formula follows from the expansion
\[ \begin{array}{ll}
e^{ a_1(t,s) } e^{  a_2(t,s)}   \vspace{1.5mm} \\
\quad = ( I + a_1(t,s) + \frac{1}{2!} a_1(t,s)^2  +  \cdots) 
  ( I + a_2(t,s) + \frac{1}{2!} a_2(t,s)^2  +   \cdots)    \vspace{1.5mm} \\
\quad =  I +( a_1(t,s) + a_2(t,s))
  + \frac{1}{2} a_1(t,s)^2   + \frac{1}{2}  a_2(t,s)^2   + a_1(t,s)  a_2(t,s)  \cdots    \vspace{1.5mm} \\
\quad =  I +( a_1(t,s) + a_2(t,s))   \vspace{1.5mm} \\
\qquad  + \frac{1}{2} \left\{ \left( a_1(t,s)   +  a_2(t,s) \right)^2 -  a_2(t,s)  a_1(t,s) +  a_1(t,s)  a_2(t,s)   \right\}  \cdots    \vspace{1.5mm} \\
\quad =  I +( a_1(t,s) + a_2(t,s))
 + \frac{1}{2} \left\{  \left( a_1(t,s)   +  a_2(t,s) \right)^2   + [a_1(t,s),  a_2(t,s)  ] \right\} \cdots .
\end{array} \]
To deal with the smallness, the same complex number $\kappa$, whose amplitude is taken to be sufficiently large, is introduced as
\[ \begin{array}{ll}
e^{ a_1(t,s) } e^{  a_2(t,s)}  + \kappa I   \vspace{1.5mm} \\
\quad =  (\kappa +1) I + ( a_1(t,s) + a_2(t,s))
 + \frac{1}{2} \left\{ \left( a_1(t,s)   + \frac{1}{2} a_2(t,s) \right)^2   + [a_1(t,s),  a_2(t,s) ] \right\} \cdots  \vspace{1.5mm} \\
\quad = ( \kappa + 1 ) \left[  I + (\kappa +1)^{-1} \left\{ ( a_1(t,s) + a_2(t,s))
 + \frac{1}{2} \left( \left( a_1(t,s)   +  a_2(t,s) \right)^2   + [a_1(t,s),  a_2(t,s) ] \right) \cdots \right\}  \right].
\end{array} \]
By dividing by nonzero $\kappa+1$,
\[ \begin{array}{ll}
( \kappa +1)^{-1} e^{ a_1(t,s) } e^{  a_2(t,s)}  + \kappa (\kappa + 1)^{-1} I   \vspace{1.5mm} \\
 =   I + (\kappa +1 )^{-1} \left\{ ( a_1(t,s) + a_2(t,s))
 + \frac{1}{2} \left( \left( a_1(t,s)   +  a_2(t,s) \right)^2   + [a_1(t,s),  a_2(t,s) ] \right) \cdots \right\} , 
\end{array} \]
where, by taking sufficient large $\kappa$, the amplitude of $( \kappa +1)^{-1} \{ ( a_1(t,s) + a_2(t,s)) \cdots \}$ is small enough to be less than 1.
It leads to
\[ \begin{array}{ll}
  {\rm Log}  \left(  e^{ a_1(t,s) } e^{  a_2(t,s)}  +  \kappa  I  \right)  \vspace{1.5mm} \\
 \quad =     {\rm Log} (\kappa + 1) +
  (\kappa +1 )^{-1} ( a_1(t,s) +  a_2(t,s) ) + \frac{1}{2} (\kappa +1 )^{-1}  [ a_1(t,s), a_2(t,s)]  + \cdots  ,
\end{array} \]
where the left hand side corresponds to the alternative infinitesimal generator of $ e^{ a_1(t,s) } e^{  a_2(t,s)} $ (cf. Eq.~\eqref{altn}).
A similar equality to the original formula shown in Lemma~\ref{lem2} is obtained by taking $\kappa \to 0$. 
By defining such alternative infinitesimal generators ${\hat \alpha}_1(t,s)$ and ${\hat \alpha}_2(t,s)$ by
\[ \begin{array}{ll}
e^{{\hat \alpha}_1(t,s)} e^{{\hat \alpha}_2(t,s)} 
=  e^{ a_1(t,s) } e^{  a_2(t,s)} 
 +  \kappa  I  ,
\end{array} \]
the formula is obtained as
\[ \begin{array}{ll}
e^{{\hat \alpha}_1(t,s)} e^{{\hat \alpha}_2(t,s)}  
 =   e^{   {\rm Log} (\kappa + 1) + (\kappa +1 )^{-1} ( a_1(t,s) +  a_2(t,s) ) + \frac{1}{2} (\kappa +1 )^{-1}  [ a_1(t,s), a_2(t,s)]  + \cdots } .
\end{array} \]
where it is not necessary to take $t=1$ (cf. proof of Lemma 2). and not necessary to assume the smallness of the regularized evolution operator $ e^{a_i(t,s)} $.
\end{proof}

Since the formal solution of first-order linear ordinary differential equation is written as $  e^{\int A(t) dt}$, the next theorem makes sense in terms of taking into account $t$-dependence of infinitesimal generator.
Furthermore a complex constant $\kappa$ does not show up in the next formula. 

\begin{theorem} [Generalized Baker-Campbell-Hausdorff formula: Type II]  \label{cor3}
Let $U_1(t,s)$ and $U_2(t,s)$ be two-parameter semigroups of operator (in other words, evolution operators) in a Banach space $X$.
The evolution operators $U_1(t,s)$ and $U_2(t,s)$ are assumed to be generated by generally unbounded operators $A_1(t)$ and $A_2(t)$ in $X$, respectively. 
Let operators $\partial_t a_1(t,s)$ and  $\partial_t a_2(t,s)$ be alternative infinitesimal generators for $A_1(t)$ and $A_2(t)$ respectively.
The alternative infinitesimal generator $\partial_t a_i(t,s)$ is assumed to commute with both $e^{\int a_1(t,s) dt}$ and $e^{\int a_2(t,s) ds}$.
The formula of exponential function of operators
\begin{equation} \begin{array}{ll}
  e^{\int a_1(t,s) dt} e^{ \int a_2(t,s) dt}
 =  \int a_1(t,s) dt |_{t=0}  + \int a_2(t,s) dt |_{t=0}    \vspace{1.5mm} \\
\quad  +  e^{   ( a_1(0,s) + a_2(0,s) )t + \frac{1}{2}   [ a_1(0,s) , a_2(0,s) ]    t^2   + \cdots }  \vspace{2.5mm} \\
\end{array} \end{equation}
and
\begin{equation} \begin{array}{ll}
   e^{\int a_1(t,s) ds} e^{ \int a_2(t,s) ds}
 =   \int a_1(t,s) ds |_{s=0}  + \int a_2(t,s) ds |_{s=0}   \vspace{1.5mm} \\
\quad  + e^{   ( a_1(t,0) + a_2(t,0) )s + \frac{1}{2}   [ a_1(t,0) , a_2(t,0) ]    s^2   + \cdots } 
\end{array} \end{equation}
are valid in $X$ by assuming a sufficiently small time interval $t,s \in [0,T]$.
For the definition of $ \int a_i(t,s) dt |_{t=0} $ or $\int a_i(t,s) ds |_{s=0} $, $t=0$ or $s=0$ is substituted after calculating an indefinite integral $ \int a_i(t,s) dt $ or $ \int a_i(t,s) ds$.
\end{theorem}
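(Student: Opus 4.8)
The plan is to reduce this $t$-dependent statement to the bounded Baker-Campbell-Hausdorff formula of Lemma~\ref{lem2}, applied not to $a_1,a_2$ directly but to their \emph{definite} integrals over $[0,t]$ (respectively $[0,s]$), and then to recover the full indefinite-integral formulation by reattaching the constants of integration with the help of the commutation hypothesis. The commutation assumption on $\partial_t a_i(t,s)$ plays here exactly the role that the commutation of $\partial_t U(t,s)$ with $U(t,s)$ plays throughout the logarithmic representation: it guarantees that $a_i(t,s)$ genuinely generates the family $e^{\int a_i(t,s)\,dt}$ and that constants of integration may be moved across the exponential factors.

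First I would record, exactly as in the proof of Theorem~\ref{thm2}, that after fixing $\kappa$ with $|\kappa|$ sufficiently large the operators $a_i(t,s)={\rm Log}(U_i(t,s)+\kappa I)$ are bounded on $X$; hence $\partial_t a_i(t,s)$ is a continuous bounded-operator-valued function of $t$, the indefinite integral $\int a_i(t,s)\,dt$ is a well-defined bounded operator, and its exponential is represented by a convergent power series. I would then split the indefinite integral as
\[ \int a_i(t,s)\,dt = \int a_i(t,s)\,dt\Big|_{t=0} + \int_0^t a_i(\tau,s)\,d\tau , \]
writing $I_i(t,s):=\int_0^t a_i(\tau,s)\,d\tau$ for the definite part. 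Since $\|I_i(t,s)\|\to 0$ as $t\to 0$, on a sufficiently small interval the regularized operators $e^{I_1(t,s)}$ and $e^{I_2(t,s)}$ are as close to the identity as desired, so in particular they satisfy the smallness requirement~\eqref{cond1} needed to invoke Lemma~\ref{lem2}.

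Next I would apply Lemma~\ref{lem2} to the pair $A=I_1(t,s)$, $B=I_2(t,s)$, obtaining $e^{I_1}e^{I_2}=\exp[\,I_1+I_2+\tfrac12[I_1,I_2]+\cdots\,]$, and expand the definite integrals in powers of $t$ via $a_i(\tau,s)=a_i(0,s)+O(\tau)$: this gives $I_1+I_2=(a_1(0,s)+a_2(0,s))t+O(t^2)$ and $[I_1,I_2]=[a_1(0,s),a_2(0,s)]t^2+O(t^3)$, so the exponent reduces to $(a_1(0,s)+a_2(0,s))t+\tfrac12[a_1(0,s),a_2(0,s)]t^2+\cdots$, which is exactly the exponential term on the right-hand side. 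Finally I would reattach the constants of integration $\int a_i(t,s)\,dt|_{t=0}$, using the commutation hypothesis to move them past $e^{I_1(t,s)}e^{I_2(t,s)}$ without altering the commutator structure; this produces the claimed right-hand side. The $s$-integrated identity is proved by the verbatim argument with $t$ and $s$ interchanged and $[0,T]$ parametrising $s$.

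The step I expect to be the main obstacle is this last piece of bookkeeping together with the control of the remainder terms. The commutation assumption has to be used with enough care to separate the constant-of-integration contributions from the $e^{I_1}e^{I_2}$ factor while preserving the low-order structure, and --- exactly as in Lemma~\ref{lem2} --- the passage to the logarithm, and hence the whole asymptotic expansion, is legitimate only within the convergence radius of ${\rm Log}$; this is precisely why the conclusion is stated only for a sufficiently small time interval $t,s\in[0,T]$, and verifying that the small-time bounds on $\|I_i(t,s)\|$ indeed secure~\eqref{cond1} is the technical heart of the proof.
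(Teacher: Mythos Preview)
Your route differs from the paper's, and the point at which you anticipate trouble is exactly where the argument breaks down.

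The paper does \emph{not} reduce to the conclusion of Lemma~\ref{lem2}; it reproduces the \emph{proof method} of Lemma~\ref{lem2} directly on the full product $e^{\int a_1(t,s)\,dt}e^{\int a_2(t,s)\,dt}$. It differentiates this product in $t$ (this is precisely where the commutation hypothesis on $\partial_t a_i$ is used, to obtain $\tfrac{d}{dt}e^{\int a_i\,dt}=a_i\,e^{\int a_i\,dt}$), computes the second derivative, and Taylor-expands around $t=0$. The constants of integration enter not additively but multiplicatively: the Taylor expansion is for the normalized product
\[
e^{-\int a_1(t,s)\,dt}\big|_{t=0}\;\Bigl(e^{\int a_1(t,s)\,dt}e^{\int a_2(t,s)\,dt}\Bigr)\;e^{-\int a_2(t,s)\,dt}\big|_{t=0},
\]
which equals $I$ at $t=0$ and whose logarithm therefore exists for small $t$ by continuity of the $C_0$-families. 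No smallness condition of the type~\eqref{cond1} needs to be checked separately, and there is no reattachment step.

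Your plan, by contrast, splits $\int a_i\,dt=C_i+I_i(t,s)$ additively and then tries to apply Lemma~\ref{lem2} to the pair $(I_1,I_2)$. The gap is that going from $e^{C_i+I_i}$ to something involving $e^{I_i}$ alone, and then ``moving $C_i$ past $e^{I_1}e^{I_2}$'', requires $[C_i,I_j]=0$ for all $i,j$. The stated commutation hypothesis concerns only $\partial_t a_i$ against $e^{\int a_j\,dt}$; it does not give commutation of the operator-valued constants $C_i$ with the definite integrals $I_j$ or their exponentials, and in particular does nothing for the cross term $[C_2,I_1]$. Without that, neither the factorization $e^{C_i+I_i}=e^{C_i}e^{I_i}$ nor the subsequent reshuffling is justified, and the low-order commutator structure you want to preserve would pick up spurious $[C_i,I_j]$ contributions. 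The paper's differentiation argument sidesteps this entirely by never splitting the exponent.
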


\begin{proof}
The proof is made similar to Lemma \ref{lem2}.
The first derivative is 
\[ \begin{array}{ll}
\frac{d}{dt} \left( e^{\int a_1(t,s) dt} e^{ \int a_2(t,s) dt} \right)
 =  e^{\int a_1(t,s) dt}  ( a_1(t,s) +  a_2(t,s) )  e^{\int a_2(t,s) dt} ,  
\end{array} \] 
and the second derivative is
\[ \begin{array}{ll}
 \frac{d^2}{dt^2}  \left( e^{\int a_1(t,s) dt} e^{ \int a_2(t,s) dt} \right)
 = \frac{d}{dt} \left(  e^{\int a_1(t,s) dt}  ( a_1(t,s) +  a_2(t,s)  e^{\int a_2(t,s) dt}  \right) \vspace{1.5mm} \\
 =   e^{\int a_1(t,s) dt} \left\{ a_1(t,s)  ( a_1(t,s) + a_2(t,s) )  +  ( a_1(t,s) +  a_2(t,s) ) a_2(t,s) \right\}  e^{\int a_2(t,s) dt}  \vspace{1.5mm} \\
 = e^{\int a_1(t,s) dt}  \left\{   ( a_1(t,s) + a_2(t,s) )^2 + [ a_1(t,s),  a_2(t,s) ] \right\}   e^{\int a_2(t,s) dt}  
\end{array} \]
resulting in
\[ \begin{array}{ll}
 e^{- \int a_1(t,s) dt} |_{t=0} ~ \left( e^{\int a_1(t,s) dt} e^{ \int a_2(t,s) dt} \right) ~  e^{-\int a_2(t,s) dt} |_{t=0}  \vspace{1.5mm} \\
 =  I + ( a_1(0,s) +  a_2(0,s) )t 
  +  \frac{1}{2} \left( (  a_1(0,s) +  a_2(0,s) )^2 + [  a_1(0,s), a_2(0,s) ]  \right)t^2 + \cdots .
\end{array} \]
Note again that, as seen in the convergence radius arguments in infinite series, the asymptotic expansion itself cannot be valid for unbounded $A$ and $B$ in $X$.

Next, its logarithm being defined by the Liesz-Dunford integral becomes
\[ \begin{array}{ll}
{\rm Log} \left(   e^{- \int a_1(t,s) dt} |_{t=0} ~ \left( e^{\int a_1(t,s) dt} e^{ \int a_2(t,s) dt} \right) ~  e^{-\int a_2(t,s) dt} |_{t=0}  \right) \vspace{1.5mm} \\
 = {\rm Log} \left(  I + ( a_1(0,s) +  a_2(0,s) )t 
 +  \frac{1}{2} \left( (  a_1(0,s) +  a_2(0,s) )^2 + [  a_1(0,s), a_2(0,s) ]  \right)t^2 + \cdots  \right)    \vspace{1.5mm} \\
 =   ( a_1(0,s) + a_2(0,s) )t + \frac{1}{2} \left( ( a_1(0,s)  +  a_2(0,s) )^2   +  [ a_1(0,s) , a_2(0,s) ]  \right)t^2 + \cdots)   \vspace{0.5mm} \\
\qquad  - \frac{1}{2} \left\{  ( a_1(0,s) + a_2(0,s) )t + \frac{1}{2} \left( ( a_1(0,s)  +  a_2(0,s) )^2   +  [ a_1(0,s) , a_2(0,s) ]  \right)t^2 + \cdots) \right\} ^2  \vspace{0.5mm} \\
\qquad  + \frac{1}{3} \left\{  ( a_1(0,s) + a_2(0,s) )t + \frac{1}{2} \left( ( a_1(0,s)  +  a_2(0,s) )^2   +  [ a_1(0,s) , a_2(0,s) ]  \right)t^2 + \cdots) \right\} ^3   \vspace{1.5mm} \\
 =   ( a_1(0,s) + a_2(0,s) )t + \frac{1}{2}   [ a_1(0,s) , a_2(0,s) ]    t^2   + \cdots
\end{array} \]
if $ \left\| ( a_1(0,s) +  a_2(0,s) )t  +  \frac{1}{2} \left( (  a_1(0,s) +  a_2(0,s) )^2 + [  a_1(0,s), a_2(0,s) ]  \right)t^2 + \cdots \right\| < 1 $ is satisfied.
Unlike Lemma \ref{lem2}, this condition is always satisfied by the validity of continuity assumption for $U_i(t,s)$ (i.e. $C_0$-semigroup) and therefore for $a_i(t,s)$.
It is notable here that a setting $t=1$ cannot make sense because $a_i(t,s)$ are $t$-dependent.
Here is the difference of condition compared to Lemma \ref{lem2}.
The second equality is obtained by the same way as the first equality.
\end{proof}

\subsection{Unbounded formulation of von Neumann equations}
A general form of the von Neumann, which is general enough to include differential operators and the other unbounded operators, is proposed here.
Let the evolution operator $U_i(t,s)$ in a Banach space $X$ be generated by $A_i(t): D(A_i(t)) \to X$ for $i=1,2$, and $a_1(t,s)$ and  $a_2(t,s)$ be alternative infinitesimal generators of $A_1(t)$ and $A_2(t)$ respectively.
In particular, $A_1(t)$ and/or $A_2(t)$ are assumed to be unbounded operator in $X$.
In this case usual type of the Campbell-Baker-Hausdorff formulae shown in Lemmas~\ref{lem1} and \ref{lem2} are not valid, and it is necessary to replace them by the formulae obtained in Theorems~\ref{thm1} to \ref{cor3}.
This fact affects the valid formulation of von Neumann type equations and therefore Liouville type equations.
Here the fundamental formalism of von Neumann equation, which is valid to be the commutation of unbounded operators, is presented.
 
\begin{theorem}[Generalized von Neumann equation] \label{cor4}
Let $U_1(t,s)$ and $U_2(t,s)$ be two-parameter semigroups of operator (in other words, evolution operators) in a Banach space $X$.
The evolution operators $U_1(t,s)$ and $U_2(t,s)$ are assumed to be generated by generally unbounded operators $A_1(t)$ and $A_2(t)$ in $X$, respectively. 
Let operators $\partial_t a_1(t,s)$ and  $\partial_t a_2(t,s)$ be alternative infinitesimal generators for $A_1(t)$ and $A_2(t)$ respectively.
The alternative infinitesimal generator $\partial_t a_i(t,s)$ is assumed to commute with both $e^{\int a_i(t,s) dt}$ and $e^{\int a_i(t,s) ds}$ ($i=1,2$).
The unbounded operator version of von Neumann equation is written by the logarithm of operator
\begin{equation} \begin{array}{ll}
\frac{\partial \hat{\rho} (t,0)}{\partial t}
 = \frac{i}{\hbar} [\hat{\rho}(t,0),\hat{H}(t)]  
 = \left. \frac{i}{\hbar} \partial_s^2 {\rm Log} \left( e^{\int \hat{\rho}(t,s) ds}  e^{\int \hat{H}(t) ds} \right) \right|_{s=0},
\end{array} \end{equation}
iwhere ${\hat H}$ is the alternative infinitesimal generator of Hamiltonian operator $H$, $\hat{\rho}(t,s)$ denotes the alternative infinitesimal generator of two-parameter density operator $\rho(t,s)$, and the logarithm is defined by the Riesz-Dunford integral.
Also in this generalized von Neumann equation, the differential is defined by a kind of weak differential \cite{20iwata-4}..
\end{theorem}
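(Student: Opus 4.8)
The plan is to reduce the stated chain of equalities to the Type~II generalized Baker-Campbell-Hausdorff formula of Theorem~\ref{cor3}, applied with the two alternative infinitesimal generators $a_1(t,s) := \hat\rho(t,s)$ and $a_2(t,s) := \hat H(t)$, the latter being independent of $s$, so that its indefinite $s$-integral is simply $\hat H(t)\,s$ plus a constant. The first step is a normalization. Since an indefinite integral is determined only up to an $s$-independent operator, I would fix that constant so that both $\int\hat\rho(t,s)\,ds$ and $\int\hat H(t)\,ds$ vanish at $s=0$, i.e. replace them by $\int_0^s\hat\rho(t,s')\,ds'$ and $\hat H(t)\,s$. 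With this choice $e^{\int\hat\rho(t,s)\,ds}\big|_{s=0}=I$ and $e^{\int\hat H(t)\,ds}\big|_{s=0}=I$, so the product $e^{\int\hat\rho(t,s)\,ds}\,e^{\int\hat H(t)\,ds}$ equals $I$ at $s=0$; by the continuity of the two $C_0$-semigroups, hence of $a_i(t,s)$, its principal logarithm (defined by the Riesz-Dunford integral) is then well defined for $|s|$ small, exactly as in the proof of Theorem~\ref{cor3}.

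Next I would invoke Theorem~\ref{cor3} in its $ds$-version. Under the commutation hypotheses assumed in the statement ($\partial_t a_i$ commuting with $e^{\int a_i\,dt}$ and $e^{\int a_i\,ds}$), the computation carried out in the proof of Theorem~\ref{cor3} yields, with $F_i(s):=e^{\int_0^s a_i(t,s')\,ds'}$ and $F_i(0)=I$,
\[
{\rm Log}\!\left( e^{\int \hat\rho(t,s)\,ds}\, e^{\int \hat H(t)\,ds} \right)
= \big(\hat\rho(t,0)+\hat H(t)\big)\,s \;+\; \tfrac12\,[\hat\rho(t,0),\hat H(t)]\,s^2 \;+\; o(s^2),
\]
the $s$- and $s^2$-coefficients being read off from $\tfrac{d}{ds}(F_1F_2)\big|_{s=0}=\hat\rho(t,0)+\hat H(t)$ and $\tfrac{d^2}{ds^2}(F_1F_2)\big|_{s=0}=(\hat\rho(t,0)+\hat H(t))^2+[\hat\rho(t,0),\hat H(t)]$ together with the power series of ${\rm Log}(I+\,\cdot\,)$. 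Applying $\tfrac{i}{\hbar}\partial_s^2$ and then setting $s=0$ kills the linear term and the $o(s^2)$ remainder and leaves precisely $\tfrac{i}{\hbar}[\hat\rho(t,0),\hat H(t)]$, which is the second equality in the theorem.

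The first equality $\partial_t\hat\rho(t,0)=\tfrac{i}{\hbar}[\hat\rho(t,0),\hat H(t)]$ is then the generalized von Neumann equation itself, obtained by transcribing the classical law $\partial_t\rho=\tfrac{i}{\hbar}[\rho,H]$ to the level of alternative infinitesimal generators via the logarithmic representation \eqref{altrep} and the $B(X)$-module structure of \cite{20iwata-2}; the weak derivative $\partial_t$ of \cite{20iwata-4} is the pertinent notion here because $\hat\rho$ need not be strongly differentiable. Chaining this with the identity of the previous paragraph produces the displayed three-term formula.

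I expect the main obstacle to be the bookkeeping around well-definedness rather than any deep point: one must guarantee that the principal logarithm is applied to an operator in a neighbourhood of $I$ (hence the normalization of the integration constants so that $e^{\int a_i\,ds}|_{s=0}=I$ and the conjugating factors appearing in the proof of Theorem~\ref{cor3} collapse), and one must check that the commutation hypotheses are genuinely what is used when differentiating $F_1(s)F_2(s)$ twice — in particular that, consistently with the asymptotic convention of Theorem~\ref{cor3}, the $s$-dependence of $\hat\rho(t,s)$ enters the $s^2$-coefficient only through its value $\hat\rho(t,0)$, the remaining contribution being absorbed into the higher-order remainder. Everything downstream of Theorem~\ref{cor3} is a routine Taylor-coefficient extraction, and since the regularizing constant $\kappa$ was already eliminated there, no $\kappa$-dependence survives in the final formula.
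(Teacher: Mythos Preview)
Your proposal is correct and follows essentially the same route as the paper: invoke the $ds$-version of Theorem~\ref{cor3} with $a_1=\hat\rho$ and $a_2=\hat H$, read off the $s^2$-coefficient of ${\rm Log}\big(e^{\int a_1\,ds}e^{\int a_2\,ds}\big)$ as the commutator $[\hat\rho(t,0),\hat H(t)]$, and then identify this with the right-hand side of the (postulated) von Neumann equation. Your additional care in normalizing the integration constants so that $e^{\int a_i\,ds}\big|_{s=0}=I$ is a welcome clarification of a point the paper leaves implicit; the paper also records the companion identities with $e^{-\int a_2\,ds}$ and $e^{-\int a_1\,ds}$, but these are not needed for the theorem as stated.
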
 
 
\begin{proof}
The validity of the following two formulae is understood in Theorem \ref{cor3}.
\[ \begin{array}{ll}
{\rm Log} \left( e^{\int a_1(t,s) ds } e^{\int a_2(t,s) ds} \right)
 =  \int a_1(t,s) ds |_{s=0}  + \int a_2(t,s) ds |_{s=0}   \vspace{1.5mm} \\
\quad  +    ( a_1(t,0) + a_2(t,0) )s + \frac{1}{2}   [ a_1(t,0) , a_2(t,0) ]    s^2   + \cdots  \vspace{2.5mm} \\
{\rm Log} \left( e^{\int a_1(t,s) ds} e^{- \int a_2(t,s) ds} \right)
=  \int a_1(t,s) ds |_{s=0}  - \int a_2(t,s) ds |_{s=0}   \vspace{1.5mm} \\
\quad  +    ( a_1(t,0) - a_2(t,0) )s - \frac{1}{2}   [ a_1(t,0) , a_2(t,0) ]    s^2   + \cdots  
\end{array} \]
leading to
\[ \begin{array}{ll}
\partial_s {\rm Log} \left( e^{\int a_1(t,s) ds } e^{\int a_2(t,s) ds} \right)) |_{s=0}
 =  a_1(t,0) +a_2(t,0),
 \vspace{2.5mm} \\
\partial_s^2 {\rm Log} \left( e^{\int a_1(t,s) ds } e^{\int a_2(t,s) ds} \right) |_{s=0}
 =  [a_1(t,0),a_2(t,0)]   ,
  \vspace{2.5mm} \\
\partial_s  {\rm Log}  \left( e^{\int a_1(t,s) ds} e^{- \int a_2(t,s) ds} \right) |_{s=0}
 = a_1(t,0) - a_2(t,0)  ,
 \vspace{2.5mm} \\
\partial_s^2  {\rm Log}  \left( e^{\int a_1(t,s) ds} e^{- \int a_2(t,s) ds} \right) |_{s=0}
 = -  [a_1(t,0),a_2(t,0)]   = [a_2(t,0),a_1(t,0)] ,
 \vspace{2.5mm} \\
 \partial_s^2  {\rm Log}  \left( e^{\int a_2(t,s) ds} e^{- \int a_1(t,s) ds} \right) |_{s=0}
 = -  [a_2(t,0),a_1(t,0)]   = [a_1(t,0),a_2(t,0)] ,
\end{array} \]
where note that $e^{a_i(t,0)}$ necessarily exist, because of the boundedness of $a_i (t,0)$ on $X$.
They result in
\[ \begin{array}{ll}
    [a_1(t,0),a_2(t,0)]   =
\left. \partial_s^2 {\rm Log} \left( e^{\int a_1(t,s) ds } e^{\int a_2(t,s) ds} \right) \right|_{s=0}   \vspace{1.5mm} \\
=
\left. \partial_s^2  {\rm Log}  \left( e^{ \int a_2(t,s) ds} e^{- \int a_1(t,s) ds} \right) \right|_{s=0}
\end{array} \]
Then the following identity is found.
\begin{equation} \begin{array}{ll}
\left. \partial_s^2  {\rm Log} \left( e^{\int a_1(t,s) ds } e^{\int a_2(t,s) ds} \right) \right|_{s=0}  
- \left.  \partial_s^2  {\rm Log}  \left( e^{  \int a_2(t,s) ds} e^{- \int a_1(t,s) ds} \right) \right|_{s=0}  = 0
\end{array} \end{equation}
where the logarithm is defined by the Riesz-Dunford integral, and the differential is defined by a kind of weak differential \cite{20iwata-4}.
The identity shows the commutation relation of evolution operators, the logarithm, and the second-order differential.

Let $\hat{H}(t)$ be the alternative infinitesimal generator of Hamiltonian operator of a certain physical system.
Here the Neumann type equation (corresponding to the quantum version the Liouville type equation), which is equivalent to the equation of motion, is written by
\[
 \frac{\partial \hat{\rho}(t,0)}{\partial t} = \frac{i}{\hbar} [\hat{\rho}(t,0) ,\hat{H}(t)]
\]
for a certain operator meaning a certain physical quantity, where $\hat{\rho}(t,0)$ is the alternative infinitesimal generator of $t$-dependent operator meaning the density (often called density matrix in physics).
According to
\[ \begin{array}{ll}
\left. \partial_s^2  {\rm Log}  \left( e^{\int a_1(t,s) ds} e^{ \int a_2(t,s) ds} \right) \right|_{s=0}
 =  [a_1(t,0),a_2(t,0)]  
\end{array} \]
and setting one-parameter operator $a_2(t,0) = \hat{H}(t)$ assuming the alternative infinitesimal generator of $t$-dependent Hamiltonian operator,
\[ 
\frac{\partial \hat{\rho}(t,0)}{\partial t} = \left. \frac{i}{\hbar} \partial_s^2 {\rm Log} \left( e^{\int \hat{\rho}(t,s) ds}  e^{\int \hat{H}(t) ds} \right) \right|_{s=0}
 \]
is obtained, where $\hat{\rho}(t,s)$ denotes two-parameter density operator.
\end{proof}

This theorem shows how the second derivative of logarithm is associated with the natural laws or more concretely the equations of motion. 
It is worth reminding here that the Toda lattice equation \cite{69toda, 7073hirota} which includes soliton solutions, is written by the differential of logarithm.

\section{Summary}
By means of the logarithmic representation of infinitesimal generators, 
\begin{itemize}
\item Campbell-Baker-Hausdorff formulae (Theorems~\ref{thm1} to \ref{cor3})  \vspace{1.5mm}
\item von Neumann equation (Theorem~\ref{cor4})
\end{itemize}
have been generalized to the cases including the commutation of unbounded operators, where the concept of alternative infinitesimal generators and the regularized evolution operators \cite{17iwata-3} are effectively utilized.
It is obvious that the second-order derivative of logarithm is an alternative concept for the commutator product.
In other words, Theorem~\ref{cor4} is a correspondence theorem between the commutator product and the second order derivative of logarithm.
Logarithmic representation is expected to be analytically advantageous in some cases compared to the algebraic commutator representation.
 
Since it is necessary to consider differential operators in describing physical laws, the results presented in Theorems~\ref{thm1} to \ref{cor3} are expected to have an impact from an applicational point of view.
Since the commutation is an important ingredient of modern physics regardless of the size of physical systems, the result presented in Theorem~\ref{cor4} shows that the second-order differential of logarithm definitely involve the concept of commutation.
It is notable that the derivative of logarithm has been known to appear in the master equation of soliton equations \cite{81ablowitz}.
For a previous work in the similar direction with respect to the Lie algebra, see unbounded generalization of rotation group \cite{18iwata-2}.\vspace{6mm}  \\

{\bf Conflict of interest.} 
The corresponding author states that there is no conflict of interest. \vspace{6mm} \\

{\bf Data availability.} 
The current study has no associated data. \vspace{6mm} \\

\end{document}